\documentclass{amsart}
\newtheorem{prop}{Proposition}[section]
\newtheorem{lema}[prop]{Lemma}
\newtheorem{teo}[prop]{Theorem}

\newtheorem{question}[prop]{\sc Question}

\usepackage{todonotes}
\newcommand{\pe}{\hat{\otimes}_{\epsilon}}
\newcommand{\ppi}{\hat{\otimes}_{\pi}}
\title{Characterization of dual mixed volumes via polymeasures}

\author{Carlos H. Jim\'enez}
\address{Departamento de An\'alisis Matem\'atico \\
Facultad de Matem\'aticas \\ Universidad Complutense de Madrid \\
Madrid 28040}

\email{carloshugo@us.es}

\author{Ignacio Villanueva}

%\address{Departamento de An\'alisis Matem\'atico \\
%Facultad de Matem\'aticas \\ Universidad Complutense de Madrid \\
%Madrid 28040}

\email{ignaciov@mat.ucm.es}

%\author{Ignacio Villanueva }

\thanks{C. H. Jim\'enez is supported by Spanish Ministry of Economy and Competitiveness (MINECO) project MTM2012-30748 and also by Mexico's National Council for Science and Technology (Conacyt) postdoctoral grant 180486. I. Villanueva is partially supported by MINECO (grant MTM2011-26912)}

\begin{document}

\begin{abstract}
We prove a characterization of the dual mixed volume in terms of functional properties of the polynomial associated to it. To do this, we use tools from the theory of multilinear operators on spaces of continuos functions. Along the way we reprove, with these same techniques, a recently found characterization of the dual mixed volume.

\end{abstract}

\subjclass{}

\keywords{}

\maketitle

\section{Introduction}
The Brunn-Minkowski theory is one of the cornerstones of modern Convex Geometry. This theory can trace back its origin to Minkowski's efforts to study the relation between two basic mathematical concepts on the set of Convex Bodies: Minkowski (or vector) addition and volume. Among the main objects of study on this theory we find the mixed volumes (see next section for definitions) which allow us to study some other very important concepts such as volume, mean width and surface area measure all within a consolidated framework.  For a comprehensive exposition on this we refer to \cite{Schb,Gardb}. Several concepts from the Brunn-Minkowski theory have been successfully extended in many ways. These extensions can reach other areas of mathematics, like those providing similar versions for log-concave functions or those of purely geometric nature. From these extensions, in this note we mainly deal with the dual mixed volume. Dual mixed volumes lie at the very core of what is now called the dual Brunn-Minkowski theory. With the development of this theory  came along several new tools that have been successfully applied in many different areas such as Integral Geometry, Geometric tomography and Local theory of Banach spaces among others. It should be mention as well, that it played a key role in the solution of Busemann-Petty problem \cite{Gard2,GarKolSchl,Lut_mv,Zha}.

Recent years have witnessed an increasing interest in the characterization of several  important parameters and operations on convex (or star) sets. The purpose is to obtain characterizations based on different fundamental properties of the parameters. Some of these properties are invariance (or covariance) with respect to basic transformations, symmetry, positivity or additivity. 

In the papers \cite{DuGaPe, MiSc}, the authors study different characterizations of the mixed volume and the dual mixed volume by their respective functional properties. 

In this note, we continue this study applying known results from the theory of multilinear operators on spaces of continuous functions.  In our main result, Theorem \ref{main}, we  provide a set or equivalent conditions that characterize the mixed volume. One of these conditions is stated in terms of the polynomial associated to the dual mixed volume, which turns out to be the $n$-dimensional volume.
As a byproduct we also reprove the functional characterization of the dual mixed volume that appeared in \cite{DuGaPe}.   

The basic idea of the connection is the following: it seems natural to try to characterize the dual mixed volume in terms of its separate additivity with respect to the radial sum. An additive function on star bodies induces an additive function on their radial functions. This, in turn, induces an additive function on $C(S^{n-1})^+$, the positive cone of the space of continuous functions on the unit sphere in $\mathbb R^n$. With a little extra help additive functions become linear. Therefore, we run into the study of multilinear forms on $C(S^{n-1})$, the space of continuous functions on $S^{n-1}$.  The situation is analogous for the case of star sets. In that case one runs into the multilinear forms on $B(\Sigma_n)$, the space of functions on $S^{n-1}$ which are the uniform limit of simple Borel functions.  

The dual spaces $C(S^{n-1})^*$ and $B(\Sigma_n)^*$ are well described by regular measures and additive measures, respectively, on $\Sigma_n$, the Borel $\sigma$-algebra of $S^{n-1}$. Similarly, multilinear forms on $C(S^{n-1})$ or $B(\Sigma_n)$ are well described by {\em polymeasures},  functions $\gamma:\Sigma_n\times \cdots \times \Sigma_n:\longrightarrow \mathbb R$ which are separately measures. 

The theory of polymeasures, or multilinear operators on $C(K)$ and $B(\Sigma)$ spaces, has been studied for some time now \cite{Do-VIII,BoVi,BoPeVi}. In particular, there are two results already in the literature which will be needed. 

First of all, it will be important to  understand  when can a polymeasure be extended to a measure in $\Sigma\otimes \cdots \otimes \Sigma$, the product $\sigma$-algebra. The answer to this for the case of Radon polymeasures appeared for the first time in \cite{BoVi}, although the particular case of bimeasures, or bilinear operators, had already been studied in \cite{KM}. The answer for bounded additive polymeasures is simpler and follows along the same lines. 

Also, it is important for our purposes to understand when  the action of a polymeasure $\gamma:\Sigma_n\times \cdots \times \Sigma_n:\longrightarrow \mathbb R$, given by $\int (f_1(t_1), \ldots, f_m(t_n)) d\gamma(t_1, \ldots, t_m)$,  can be described by means of a measure $\mu:\Sigma_n\longrightarrow \mathbb R$ via the integral $\int f_1(t)\cdots f_m(t) d\mu(t)$. The answer to this appeared for the first time in \cite{OA} and \cite{BeLaLl} for the case of Radon polymeasures. Again, the case of bounded additive measures is simpler and is essentially contained in the previous one. 

For completeness, we state these results in the next section.

\section{Notation and previous results}

We will need several definitions and results from the theory of polymeasures and multilinear operators in spaces of continuous functions as well as from Convex Geometry.

As usual, $S^{n-1}$ stands for the Euclidean unit sphere. We denote by $\Sigma_n$ its Borel $\sigma$-algebra. For $K\subset\mathbb R^n$ compact, its support function $h_K$ is defined as $$h_K(x)=\max\{x\cdot y\ :\ y\in K\}.$$ The support function uniquely characterizes the set $K$. The Minkowski addition of sets $K$ and $L$ is defined as the set $K+L$ satisfying $h_{K+L}(x)=h_K(x)+h_L(x)$. We denote the space of convex bodies (compact, convex set with non-empty interior) by $\mathcal{ K}^n$. For $K_1,...,K_m\in\mathcal{ K}^n$ and $\lambda_1,...,\lambda_m\geq 0$, the volume $$|\lambda_1K_1+...+\lambda_mK_m|=\sum_{1\leq i_1,...,i_n\leq m}\lambda_{i_1}...\lambda_{i_n}V(K_{i_1},...,K_{i_n})$$ is a polynomial on the variables $\lambda_1,...,\lambda_m$ where the coefficients $V(K_{i_1},...,K_{i_n})$ are called mixed volumes. The previous non-trivial fact is a classical theorem in the field due to Minkowski.  
 
A set $L$ is star shaped at $0$ if every line through $0$ that meets $L$ does so in a (possibly degenerate) line segment. We denote by $\mathcal S^n$ the set of the star sets. 

Given a star set $L$, we define its {\em radial function} $\rho_L$ by

\begin{eqnarray*}
\rho_L(x)= \left \{ \begin{array}{cl}
      \displaystyle{ \max\{c\, : \, cx\in L\}}
        & \displaystyle{\quad \mbox{if } L\cap l_x \not = \emptyset }\\
        \noalign{\bigskip}
 0 \quad  & \quad \mbox{otherwise} 
 \end{array} \right.
\end{eqnarray*}

Clearly, radial functions are totally characterized by their restriction to $S^{n-1}$, so from now on we consider them defined on $S^{n-1}$.

A star set $L$ is called a star body if and only if $\rho_L$ is continuous. We denote by $\mathcal S_0^n$ the set of star bodies. Conversely, given a positive and continuous function $f: S^{n-1}\longrightarrow [0,\infty)$, 
it can be considered as the radial function of a star body $L_f$. 

Given two sets $L,M\in \mathcal S^n$, we define their {\em radial sum} as the star set $L\tilde{+}M$ whose radial function is $\rho_L+\rho_M$.  

For $L_1,...,L_n\in \mathcal S_0^n$ their dual mixed volume is defined as

$$\tilde{V}(L_{1},...,L_{n})=\frac{1}{n}\int_{S^{n-1}}\rho_{L_{1}}(u)...\rho_{L_{n}}(u)du.$$

In \cite{Lut_mv1} Lutwak came up with the following analogous result to Minkowski's theorem for mixed volumes mentioned earlier: given  for sets $L_1,...,L_m\in \mathcal S_0^n$ and $\lambda_1,...,\lambda_m>0$ the volume of the radial sum verifies $$|L_1\tilde{+}...\tilde{+}L_m|=\sum_{1\leq i_1,...,i_n\leq m}\lambda_{i_1}...\lambda_{i_n}\tilde{V}(L_{i_1},...,L_{i_n}).$$

That is, $|L_1\tilde{+}...\tilde{+}L_m|$ 
is an homogeneous polynomial of degree $n$  in the variables $\lambda_{1},...,\lambda_{m}$ where each coefficient $\tilde{V}(L_{i_1},...,L_{i_n})$ depends only on the bodies $L_{i_1},...,L_{i_n}$.

We state now the definitions and results from the theory of polymeasures that we will need for our results. 

Let $\Sigma _i\  (1\le i\le m)$ be $\sigma$-algebras
(or simply algebras) of subsets on some
non void sets $S_i$.
A function $\gamma:{\Sigma }_{1} \times \cdots \times {\Sigma }_{m} \longrightarrow \mathbb R$
is a (countably additive) $m$-{\em polymeasure} if it is separately
(countably) additive. 

Same as in the case $m=1$, we can define the \begin{em}
variation \end{em} of a polymeasure
as the set function $$v(\gamma )
:{\Sigma }_{1} \times \cdots \times {\Sigma }_{m} \longrightarrow
[0,+\infty]$$ given by

$$v(\gamma )(A_{1}, \ldots ,A_{m}) =\sup
\left\{\sum_{j_{1}=1}^{n_{1}} \cdots \sum_{j_{m}=1}^{n_{m}}
\left| \gamma (A_1^{j_1}, \ldots A_m^{j_m} )\right| \right\},$$
where the supremum is taken over all the finite $\Sigma
_i$-partitions $(A_i^{j_i})_{j_{i}=1}^{n_{i}}$ of $A_{i}$ ($1\leq
i \leq m$). 

We can define also its
\begin{em} semivariation \end{em}
$$\|\gamma \|
:{\Sigma }_{1} \times \cdots \times {\Sigma }_{m} \longrightarrow
[0,+\infty]$$ by

    $$ \|\gamma \| (A_{1}, \ldots , A_{m}) = \sup \left \{ \left |
    \sum_{j_{1}=1}^{n_{1}} \cdots \sum_{j_{m}=1}^{n_{m}} a_1^{j_1}
 \ldots a_m^{j_m} \gamma (A_1^{j_1}, \ldots, A_m^{j_m} )\right
    | \right\}$$
where the supremun is taken over all the finite $\Sigma
_i$-partitions $(A_i^{j_i})_{j_{i}=1}^{n_{i}}$ of $A_{i}$ ($1\leq
i \leq m$), and all the collections $(a_i^{j_i})_{j_i=1}^{n_i}$
contained in the unit ball of the scalar field. 

%We will call
%$bpm(\Sigma_1,\ldots, \Sigma_m)$  the Banach space of the
%polymeasures with bounded semivariation defined on
%$\Sigma_1\times\cdots\times\Sigma_m$, endowed
%with the semivariation norm.

If $\gamma $ has finite semivariation, an elementary integral
$\int (f_1,f_2,\ldots f_m)\>d\gamma $ can be defined, where $f_i$
are bounded, $\Sigma_i$-measurable scalar functions, just taking
the limit of the integrals of $m$-uples of simple functions (with
the obvious definition) uniformly converging to the $f_i$'s (see
\cite{Do-VIII}).

Polymeasures can be used to represent continuous multilinear forms in spaces of continuous functions. We need some notation. 
Given a Hausdorff compact set $S$, $\Sigma_S$ is the $\sigma$-algebra of its Borel sets. 
$S(S)$ is the set of Borel simple functions with support in $\Sigma_S$. $B(S)$ is the completion of $S(S)$ with respect to the supremum norm.  $C(S)$ is the space of the continuous functions defined on the compact set $S$, endowed with the supremum norm. 

$C(S)$ is naturally contained in $B(S)$, and, in turn $B(S)$ is naturally contained in the bidual space $C(S)^{**}$, the inclusion being given by $$\langle g, \mu\rangle=\int g d\mu$$ for every $g\in B(S)$, $\mu\in C(S)^*$. In particular, this implies that every continuous linear form $T:C(S)\longrightarrow \mathbb R$ can be extended by weak$^*$ continuity over $C(S)^{**}$ to a continuous linear form $\overline{T}:B(S)\longrightarrow \mathbb R$. The same fact remains true for multilinear forms, where measures are replaced by polymeasures. 

\begin{teo}[\cite{BoVi}]\label{representacion}
Let  $S$ be a compact Hausdorff
space and $\Sigma$ its Borel $\sigma$-algebra. Every $m$-linear continuous form $T:
C(S)\times \cdots \times C(S)\longrightarrow \mathbb R$ has a unique
\begin{em}
representing polymeasure
\end{em} $\gamma : \Sigma\times \cdots \Sigma
\rightarrow \mathbb R$ with finite semivariation. $T$ and $\gamma$ are related by the formula 
    $$T(f_1,\ldots ,f_m) = \int (f_1,\ldots ,f_m)\> d\gamma \ \hbox{
 for every } f_1, \ldots, f_m \in C(S).$$
$\gamma$ is separately countably additive and regular .

Conversely, if $\gamma : \Sigma\times \cdots \Sigma
\rightarrow \mathbb R$ is a regular countably additive polymeasure then it has finite semivariation and $$T(f_1,\ldots ,f_m) = \int (f_1,\ldots ,f_m)\> d\gamma$$ defines a continuous multilinear form $T:C(S)\times \cdots \times C(S)\longrightarrow \mathbb R$ that satisfies $\|T\|=\|\gamma\|$.

Moreover, there exists a unique extension $T^{**}:C(S)^{**}\times \cdots \times C(S)^{**}\longrightarrow \mathbb R$ of $T$ that is separately weak$^*$ continuous. 

This extension can be defined the following way: For $(z_1, \ldots, z_m)\in (C(S)^{**}\times \cdots \times C(S)^{**})$ we can choose, for every $1\leq i \leq m$, a net  $(f_{\alpha_i})_{\alpha_i}\subset C(S)$ such that $\lim_{\alpha_i} f_{\alpha_i}=z_i$ in the weak$^*$ topology. Then $$T^{**}(z_1,\ldots, z_m)=\lim_{\alpha_1}\cdots \lim_{\alpha_m} T(f_{\alpha_1}, \ldots, f_{\alpha_m}).$$

In particular, the restriction to the product of the $B(S)$'s defines a continuous multilinear form $$\overline{T}:B(S)\times \cdots \times B(S)\longrightarrow \mathbb R$$ such that, for all Borel sets $(A_1,\ldots, A_m)\in \Sigma_S\times \cdots \times \Sigma_S$, $$\overline{T}(\chi_{A_1}, \ldots, \chi_{A_m})=\gamma(A_1, \ldots, A_m).$$
\end{teo}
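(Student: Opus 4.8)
The plan is to reduce the entire statement to the scalar Riesz representation theorem plus one genuinely Banach-space-theoretic input, namely the existence of a separately weak$^*$ continuous bidual extension of $T$; once that extension is in hand, the representing polymeasure, all of its stated properties, and the integral formula drop out almost formally. I would therefore prove the separately weak$^*$ continuous extension first and read off the polymeasure from it.

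The heart of the matter is to produce $T^{**}$. Uniqueness is free: $C(S)$ is weak$^*$ dense in $C(S)^{**}$, so separate weak$^*$ continuity determines $T^{**}$ from the values of $T$ on $C(S)^m$. For existence I would argue by induction on $m$, freezing all but two coordinates so that the problem becomes the Arens regularity of a bounded bilinear form $B$ on $C(S)\times C(S)$, i.e.\ the coincidence of the two iterated weak$^*$ limits $\lim_\alpha\lim_\beta B(f_\alpha,g_\beta)$ and $\lim_\beta\lim_\alpha B(f_\alpha,g_\beta)$. This is exactly where the structure of $C(S)$ is essential: $B$ corresponds to the operator $f\mapsto B(f,\cdot)$ from $C(S)$ into $C(S)^*$, and since $C(S)^*$ is weakly sequentially complete it contains no copy of $c_0$, so by a classical theorem of Pe\l{}czy\'nski every operator $C(S)\to C(S)^*$ is weakly compact; weak compactness of this operator is equivalent to Arens regularity of $B$. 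Bootstrapping this across all pairs of coordinates yields separate weak$^*$ continuity of the extension in every variable, and a direct check shows that the iterated-limit formula $T^{**}(z_1,\dots,z_m)=\lim_{\alpha_1}\cdots\lim_{\alpha_m}T(f_{\alpha_1},\dots,f_{\alpha_m})$ then computes it independently of the chosen nets. I expect this to be the main obstacle.

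With $T^{**}$ available, I would \emph{define} the representing object by $\gamma(A_1,\dots,A_m)=T^{**}(\chi_{A_1},\dots,\chi_{A_m})$, which makes sense since each $\chi_{A_i}\in B(S)\subset C(S)^{**}$. Separate countable additivity is then immediate from separate weak$^*$ continuity: if sets $A^{(k)}$ increase to $A$ then $\mu(A^{(k)})\to\mu(A)$ for every $\mu\in C(S)^*$, i.e.\ $\chi_{A^{(k)}}\to\chi_A$ weak$^*$, so the corresponding slot of $\gamma$ is countably additive. Regularity of each slot is inherited from the regular Borel measures that Riesz attaches to the one-variable restrictions of $T$.

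Finally I would settle the norm and integral statements by a density computation. For finite partitions $(A_i^{j_i})$ and scalars $a_i^{j_i}$ in the unit ball, $\sum a_1^{j_1}\cdots a_m^{j_m}\gamma(A_1^{j_1},\dots,A_m^{j_m})=T^{**}(s_1,\dots,s_m)$ for the simple functions $s_i=\sum_{j_i}a_i^{j_i}\chi_{A_i^{j_i}}$ of norm $\le1$; taking suprema and using that simple functions are dense in $B(S)$ and that $T^{**}$ restricted to $B(S)^m$ has the same norm as $T$ yields $\|\gamma\|=\|T\|$, in particular finite semivariation. The representation $T(f_1,\dots,f_m)=\int(f_1,\dots,f_m)\,d\gamma$ follows by approximating each continuous $f_i$ uniformly by simple functions, finite semivariation guaranteeing continuity of the elementary integral; and the converse assertion (that a regular countably additive polymeasure of finite semivariation defines a continuous form with $\|T\|=\|\gamma\|$) is the same estimate read in reverse.
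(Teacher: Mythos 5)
This statement is Theorem~2.1 of the paper, which is quoted from the reference [BoVi] and is given no proof in the text, so there is no internal argument to compare yours against; I can only measure your proposal against the standard proof in the cited literature. Your route --- build the separately weak$^*$ continuous bidual extension first, via Arens regularity of bilinear forms on $C(S)$, which you correctly derive from Pe\l czy\'nski's property (V) (every operator from $C(S)$ into the weakly sequentially complete space $C(S)^*$ is weakly compact), and then \emph{define} $\gamma(A_1,\dots,A_m)=T^{**}(\chi_{A_1},\dots,\chi_{A_m})$ and read off countable additivity, regularity, the integral formula and $\|\gamma\|=\|T\|$ by density of simple functions --- is essentially the argument of [BoVi] and its antecedents (Pe\l czy\'nski for $m=2$, Dobrakov in general). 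The deductions you make once $T^{**}$ exists are all sound.

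Two points are genuinely underproved. First, the step you yourself identify as the main obstacle is not finished: pairwise Arens regularity obtained by ``freezing all but two coordinates'' does not by itself hand you an $m$-linear extension that is weak$^*$ continuous in \emph{every} variable for \emph{arbitrary} bidual elements in the other slots; the one-variable-at-a-time Aron--Berner construction is automatically weak$^*$ continuous only in the last variable extended, and showing the iterated limits can be permuted for all $m$ variables requires applying the weak compactness argument to the partial maps with some slots already in $C(S)^{**}$, which is exactly the content of the induction in [BoVi] and needs to be written out. Second, in the converse direction the assertion that a separately regular, separately countably additive polymeasure automatically has \emph{finite} semivariation is not ``the same estimate read in reverse'': it rests on a Nikod\'ym-type uniform boundedness theorem iterated over the $m$ slots, and then on the comparison of boundedness with semivariation. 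Finally, you prove uniqueness of $T^{**}$ but not uniqueness of the representing polymeasure $\gamma$, which requires the regularity of the slot measures together with a Urysohn approximation of characteristic functions of compact and open sets. None of these is a wrong turn, but each is a real piece of the cited theorem rather than a formality.
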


Given a polymeasure $\gamma$ we can consider the set function
$\gamma _m$ defined on the semi-ring of all measurable rectangles
$A_1\times\cdots \times A_m\ (A_i\in \Sigma)$ by
    $$
\gamma _m(A_1\times\cdots \times A_m) := \gamma (A_1,\ldots ,A_m)
    $$
It follows, for instance, from \cite[Prop. 1.2]{DiMu}, that $\gamma _m$ is
finitely additive and then it can be uniquely extended to a
finitely additive measure on the algebra $a(\Sigma\times \cdots
\times \Sigma)$ generated by the measurable rectangles. In
general, this finitely additive measure is not bounded and therefore it can not be extended to the  $\sigma$-algebra $\Sigma\otimes \cdots \otimes
\Sigma$ generated by $\Sigma\times \cdots \times \Sigma$.

%To see this, consider a sequence $(x_i)_{i\in \mathbb N}\subset \ell_1$ which is unconditionally summing (that is, for every choice of signs $\epsilon_i=\pm 1$, $\sum_i \epsilon_i x_i$ is converging) but not absolutely summing (that is, $\sum_i \|x_i\|$ is not converging). The existence of such sequences is guaranteed by the Dvoretzky-Rogers Theorem (see, for instance, \cite[Theorem 1.2]{DiJaTo}). Define an operator $\tilde{T}:c\longrightarrow \ell_1$ by $$\tilde{T}(e_n)=x_n.$$

%It follows from the unconditional summability of $(x_n)$ that $\tilde{T}$  is continuous (see \cite[Theorem 1.9]{DiJaTo}). Therefore, the associated bilinear operator $T:c \times c\longrightarrow \mathbb R$ is continuous, and the associated polymeasure  $\gamma$ is bounded. But .... TERMINAR SI INTERESA.

\smallskip

The extension of polymeasures to measures defined on the product $\sigma$-algebra is related to the extension of multilinear operators from the projective tensor product of Banach spaces to the injective tensor product. 

We refer to \cite{DeFl} for the definition of projective and injective tensor product of Banach spaces. Following the usual notation, we refer to them as $X\ppi X$ and $X\pe X$ respectively. 
We recall that $C(S\times S)$ is isometric to the injective tensor product  $C(S)\pe C(S)$.

A continuous multilinear form $T:X\times \cdots \times X\longrightarrow \mathbb R$ on the product of Banach spaces induces a continuous form $T:X\ppi \cdots \ppi X\longrightarrow \mathbb R$. 

In general, this form will not be continuous when considered defined on the injective tensor product. 

The continuity of a multilinear form $T:C(S)\times \cdots \times C(S)\longrightarrow \mathbb R$ when considered defined on the injective tensor product $C(S)\pe \cdots \pe C(S)=C(S\times \cdots \times S)$ is equivalent to the extendability of the associated polymeasure $\gamma$ to a measure on the Borel sets of $S\times \cdots \times S$. This is the content of the next result, proved in \cite{BoViC(K)}. As mentioned there, the result was proven for the special case of bilinear forms in \cite{KM}, but it does not seem possible to extend the  proof techniques of \cite{KM} to $m>2$.

\begin{teo}\label{teo1}
Let $T:C(S)\times\cdots \times C(S)\longrightarrow \mathbb R$ be an $m$-linear form with
representing polymeasure $\gamma$.

Then the following are equivalent:

a) $v( \gamma)<\infty$.

b) $T:C(S)\pe \cdots\pe C(S)\longrightarrow \mathbb R$ is continuous. 

c) $\gamma$ can be extended to a regular measure $\mu$ on the $\sigma$-algebra of the Borel sets of $S\times \cdots \times S$. 

d) $\gamma$ can be decomposed as the sum of a positive and a negative polymeasure.
\end{teo}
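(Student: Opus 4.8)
The plan is to prove the cycle of implications by isolating the one genuinely hard step, the extension (a)$\Rightarrow$(c), and deriving everything else from routine functional-analytic facts. I would first dispose of the equivalence (b)$\Leftrightarrow$(c). Since $C(S)\pe \cdots \pe C(S)$ is isometric to $C(S\times\cdots\times S)$, and under this identification the elementary tensor $f_1\otimes\cdots\otimes f_m$ is the function $(t_1,\ldots,t_m)\mapsto f_1(t_1)\cdots f_m(t_m)$, the form $T$ is continuous on the injective tensor product precisely when it extends to a continuous linear functional on $C(S\times\cdots\times S)$. By the Riesz representation theorem such a functional is integration against a unique regular Borel measure $\mu$ on $S\times\cdots\times S$, and testing against products approximating indicators shows $\mu(A_1\times\cdots\times A_m)=\gamma(A_1,\ldots,A_m)$, i.e. $\mu$ extends $\gamma$. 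Conversely a regular measure extending $\gamma$ produces, by integration of products, a functional on the dense span of elementary tensors that is bounded in the injective norm and hence extends continuously. This gives (b)$\Leftrightarrow$(c).

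Next I would collect the easy implications feeding (a) and (d). For (c)$\Rightarrow$(a): if $\mu$ is a regular measure extending $\gamma$, then for finite $\Sigma$-partitions of $S$ the rectangles $A_1^{j_1}\times\cdots\times A_m^{j_m}$ partition $S\times\cdots\times S$, so $\sum_{j_1,\ldots,j_m}|\gamma(A_1^{j_1},\ldots,A_m^{j_m})|=\sum|\mu(A_1^{j_1}\times\cdots\times A_m^{j_m})|\le |\mu|(S\times\cdots\times S)<\infty$, whence $v(\gamma)<\infty$. For (c)$\Rightarrow$(d): take the Jordan decomposition $\mu=\mu^+-\mu^-$ into positive regular measures and set $\gamma^\pm(A_1,\ldots,A_m)=\mu^\pm(A_1\times\cdots\times A_m)$; these are positive polymeasures with $\gamma=\gamma^+-\gamma^-$. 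Finally (d)$\Rightarrow$(a): a positive polymeasure $\beta$ satisfies $|\beta(A_1^{j_1},\ldots,A_m^{j_m})|=\beta(A_1^{j_1},\ldots,A_m^{j_m})$, and summing over a partition collapses by separate additivity to $\beta(S,\ldots,S)$; hence $v(\beta)=\beta(S,\ldots,S)<\infty$, so $v(\gamma)\le v(\gamma^+)+v(\gamma^-)<\infty$.

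It remains to prove the hard implication (a)$\Rightarrow$(c), which closes the cycle. As recalled before the statement, $\gamma_m$ is already finitely additive on the algebra $a(\Sigma\times\cdots\times\Sigma)$ generated by the measurable rectangles; the hypothesis $v(\gamma)<\infty$ says precisely that this finitely additive set function is of bounded variation, hence bounded. The plan is to upgrade finite additivity to countable additivity on this algebra and then invoke the Carath\'eodory--Hahn--Kolmogorov extension theorem to obtain a bounded signed measure $\mu$ on $\Sigma\otimes\cdots\otimes\Sigma$ restricting to $\gamma$ on rectangles. The crucial input is that, by Theorem \ref{representacion}, $\gamma$ is separately countably additive and regular in each variable; this is what I would exploit to verify continuity at the empty set. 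Concretely, for a decreasing sequence of algebra elements shrinking to $\emptyset$ I would use the compactness of $S$ together with inner regularity by compacta and outer regularity by opens, coordinate by coordinate, to trap the relevant unions of rectangles between sets whose $\gamma_m$-measure is controlled, forcing $\gamma_m$ to vanish in the limit. Regularity of the resulting $\mu$ then follows from the coordinatewise regularity, and uniqueness is automatic since the rectangles form a $\pi$-system generating $\Sigma\otimes\cdots\otimes\Sigma$ and $\mu$ has finite total variation.

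The main obstacle is exactly this last step: passing from separate (coordinatewise) countable additivity and regularity to genuine countable additivity of the single set function $\gamma_m$ on the generated algebra. Bounded finite additivity alone never yields countable additivity, so the argument must essentially use the topological regularity in each variable and the compactness of $S$; this is the technical heart of the theorem, and the point at which, as noted before the statement, the bimeasure techniques of \cite{KM} do not transfer to $m>2$ and the approach of \cite{BoViC(K)} is required.
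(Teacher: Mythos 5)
First, a point of comparison: the paper does not prove this theorem at all --- it is quoted from \cite{BoViC(K)} (with the bilinear case attributed to \cite{KM}), so there is no in-paper proof to measure your attempt against. Judged on its own terms, your treatment of the routine implications is sound: (b)$\Leftrightarrow$(c) via the identification $C(S)\pe\cdots\pe C(S)=C(S\times\cdots\times S)$ and the Riesz representation theorem, (c)$\Rightarrow$(a) by dominating the partition sums by $|\mu|(S\times\cdots\times S)$, (c)$\Rightarrow$(d) by Jordan decomposition of $\mu$ restricted to rectangles, and (d)$\Rightarrow$(a) by collapsing the sums for a positive polymeasure are all correct as sketched.

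The genuine gap is (a)$\Rightarrow$(c), which you yourself flag as ``the main obstacle'' and leave as a plan rather than a proof. Without it your implications only give (b)$\Leftrightarrow$(c)$\Rightarrow$(d)$\Rightarrow$(a), so the equivalence is not established. Moreover, the route you propose --- prove countable additivity of $\gamma_m$ at $\emptyset$ on the algebra $a(\Sigma\times\cdots\times\Sigma)$ and invoke Carath\'eodory --- runs head-on into the difficulty you name but do not resolve: separate countable additivity and separate regularity of $\gamma$ do not in any obvious way yield joint inner regularity of the single set function $\gamma_m$ on finite unions of rectangles, and a decreasing sequence of such unions with empty intersection has no coordinatewise structure for your ``trapping'' argument to exploit. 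A cleaner route avoids this entirely: for a simple function $u=\sum c_{j_1\ldots j_m}\chi_{A_1^{j_1}\times\cdots\times A_m^{j_m}}$ built on a product of partitions one has $\left|\sum c_{j_1\ldots j_m}\gamma(A_1^{j_1},\ldots,A_m^{j_m})\right|\le \|u\|_\infty\, v(\gamma)(S,\ldots,S)$, and since by Stone--Weierstrass the uniform closure of such simple functions contains $C(S\times\cdots\times S)$, finite variation gives (a)$\Rightarrow$(b) directly; your own (b)$\Rightarrow$(c) then closes the cycle. Even on that route, the verification that the resulting Riesz measure agrees with $\gamma$ on rectangles requires the regularity of $\gamma$ and an Urysohn approximation in the spirit of Lemma \ref{continuas}, and should be written out rather than asserted.
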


Let us also recall that, if $S$ is compact metrizable, then
$\Sigma_S\otimes \cdots \otimes \Sigma_S=\Sigma_{S\times \cdots \times S}$, where $\Sigma_L$ denotes the Borel $\sigma$-algebra of a set $L$ and 
$\Sigma_S\otimes \cdots \otimes \Sigma_S$ denotes the product
$\sigma$-algebra, i.e., the smallest $\sigma$-algebra that
contains the rectangles $(A_1\times \cdots \times A_m)\subset
S^m$, with $A_i\in \Sigma_S$ ($1\leq i \leq m$) (see \cite[7.6.2
and 7.1.12]{Co}).

 It follows from the previous paragraph that we can identify $\Sigma_n\otimes \cdots \otimes \Sigma_n$ with $\Sigma_{(S^{n-1})^m}$, the $\sigma$-algebra of the Borel sets of $S^{n-1}\times \cdots \times S^{n-1}$. 

Before we state our last result from the theory of polymeasures, we need one more definition. 

Given a Banach lattice $X$ and a Banach space $Y$ (in our case $Y$ will be the scalar field), a function $\varphi:X\longrightarrow  Y$ is called {\it orthogonally
additive} if, for every $f,g\in X$ with disjoint support,
$\varphi(f+g)=\varphi(f)+\varphi(g)$. Orthogonally additive
functions and their representations have been studied by several
authors since the sixties (\cite{DreOr3},
\cite{FrKa1}, \cite{FrKa2}). 

The following result was proven in \cite{OA} and, independently, in \cite{BeLaLl}.

\begin{teo}\label{TeoOA}
Let $P:C(S)\longrightarrow \mathbb R$ be an orthogonally additive $n$-
homogeneous polynomial with associated multilinear form $T$.
Then, there exists a continuous linear form $\varphi\in C(S)^*$,
with associated measure $\nu:\Sigma\longrightarrow \mathbb R$, such that
$\|\varphi\|=\|T\|$ and such that, for every $f\in C(S)$,
    $$P(f)=\varphi(f^n)=\int_S f^n d\nu.$$
\end{teo}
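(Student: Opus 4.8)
The plan is to represent $P$ through its polar form and associated polymeasure, and then to show that orthogonal additivity forces the polymeasure to be concentrated on the diagonal; the measure $\nu$ will be its diagonal. Let $T$ be the symmetric $n$-linear form with $P(f)=T(f,\ldots,f)$, and let $\gamma$ be its representing polymeasure from Theorem \ref{representacion}. First I would extract the analytic consequence of orthogonal additivity at the level of $C(S)$. Given $f,g\in C(S)$ with disjoint supports and a scalar $t$, the functions $f$ and $tg$ still have disjoint supports, so orthogonal additivity together with $n$-homogeneity gives $P(f+tg)=P(f)+t^nP(g)$. Expanding the left-hand side by multilinearity and symmetry as $\sum_{k=0}^n\binom{n}{k}t^{n-k}T(f^{[k]},g^{[n-k]})$, where $f^{[k]}$ denotes $f$ placed in $k$ slots, and comparing coefficients of this polynomial identity in $t$, every mixed coefficient must vanish:
$$T(\underbrace{f,\ldots,f}_{k},\underbrace{g,\ldots,g}_{n-k})=0,\qquad 1\le k\le n-1,$$
for all $f,g\in C(S)$ with disjoint support.

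Next I would transfer this vanishing to the polymeasure. Using the separately weak$^*$-continuous extension $\overline{T}$ of Theorem \ref{representacion} and the identity $\overline{T}(\chi_{A_1},\ldots,\chi_{A_n})=\gamma(A_1,\ldots,A_n)$, the target is
$$\gamma(\underbrace{A,\ldots,A}_{k},\underbrace{B,\ldots,B}_{n-k})=0\quad\text{whenever } A\cap B=\emptyset.$$
I would first prove this for disjoint \emph{compact} sets $K_1,K_2$: approximating $\chi_{K_1}$ and $\chi_{K_2}$ weak$^*$ by continuous Urysohn functions supported in shrinking disjoint neighbourhoods of $K_1$ and $K_2$, the mixed forms are identically zero along the whole approximation, and separate weak$^*$ continuity of $\overline{T}$, applied one coordinate at a time, carries this zero to the limit. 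Inner regularity of the real measures obtained by freezing all but one argument of $\gamma$, together with separate countable additivity, then upgrades the statement from disjoint compacta to arbitrary disjoint Borel sets. Once the mixed terms vanish, separate additivity of $\gamma$ yields full diagonal concentration, $\gamma(A_1,\ldots,A_n)=\nu(A_1\cap\cdots\cap A_n)$, where $\nu(A):=\gamma(A,\ldots,A)$.

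It then remains to verify that $\nu$ is a regular finite signed Borel measure and to identify $P$. Additivity of $\nu$ on disjoint sets is exactly the vanishing of the mixed terms; countable additivity follows from the separate countable additivity of $\gamma$; regularity is inherited from the regularity of $\gamma$; and finiteness of the total variation follows from the finite semivariation of $\gamma$ combined with diagonal concentration. For the representation, if $f=\sum_j a_j\chi_{A_j}$ is simple with the $A_j$ disjoint, then diagonal concentration collapses $\overline{P}(f)=\overline{T}(f,\ldots,f)$ to $\sum_j a_j^n\nu(A_j)=\int_S f^n\,d\nu$; since simple functions are uniformly dense in $B(S)\supseteq C(S)$ and $\overline{P}$ is continuous, $P(f)=\int_S f^n\,d\nu$ for every $f\in C(S)$. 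Finally, setting $\varphi(f)=\int_S f\,d\nu$, the equality $\|\varphi\|=|\nu|(S)=\|T\|$ follows: for $\|f_i\|_\infty\le 1$ one has $|T(f_1,\ldots,f_n)|=|\int_S f_1\cdots f_n\,d\nu|\le|\nu|(S)$, while choosing $f_2=\cdots=f_n\equiv 1$ and $f_1$ a continuous (Lusin-type, via regularity) approximation of the sign of $d\nu$ shows the bound is attained.

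The main obstacle is the transfer step: the polarization identity lives on $C(S)$ and genuinely needs disjoint supports, whereas the polymeasure is indexed by arbitrary Borel sets, and the weak$^*$ approximating nets of continuous functions do not preserve disjointness of supports. The device that resolves this is to approximate characteristic functions of disjoint compacta by continuous functions whose supports can be kept disjoint, which is possible precisely because disjoint compacta in a metric space lie at positive distance, so that the mixed forms are identically zero \emph{before} any limit is taken; separate weak$^*$ continuity and regularity then do the rest. Care is needed to move several coordinates to their limits one at a time, which is legitimate exactly because the relevant forms vanish throughout the process rather than only in the limit.
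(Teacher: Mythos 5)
First, a point of reference: the paper does not actually prove Theorem \ref{TeoOA}; it quotes it from \cite{OA} and \cite{BeLaLl}. Your argument reconstructs the strategy of \cite{OA}, which is also the machinery the paper itself deploys in Lemma \ref{continuas} and in the proof of Theorem \ref{main}: representing polymeasure, Urysohn/regularity transfer from continuous functions to Borel sets, collapse of the multilinear form on simple functions, then density. Most of your steps are sound as written: the polarization $P(f+tg)=P(f)+t^nP(g)$, the passage to disjoint compacta via disjointly supported Urysohn nets and separate weak$^*$ continuity (taking limits one coordinate at a time, exactly as the extension in Theorem \ref{representacion} is built), the upgrade to arbitrary disjoint Borel sets by regularity, the identification $P(f)=\int_S f^n\,d\nu$ on simple functions, and the norm computation $\|\varphi\|=|\nu|(S)=\|T\|$.

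The one step you should not wave through is the passage from what you actually prove to ``full diagonal concentration''. Polarizing with \emph{two} disjointly supported functions only yields the two-block identities $\gamma(A^{[k]},B^{[n-k]})=0$ for disjoint $A,B$. But the simple-function computation $\overline{T}(g_1,\ldots,g_n)=\sum_l a_l^1\cdots a_l^n\,\gamma(A_l,\ldots,A_l)$ requires $\gamma(A_{l_1},\ldots,A_{l_n})=0$ whenever the pairwise disjoint sets involved are not all equal, and for $n\ge 3$ such a tuple can contain three or more distinct sets. Separate additivity of $\gamma$ does not hand you this from the two-block case: expanding $\gamma(A^{[k]},(B\cup C)^{[n-k]})$ produces a single linear relation containing several unknown mixed terms, so the deduction is at best a nontrivial combinatorial induction that you have not supplied. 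The repair is standard and stays entirely within your framework: iterate orthogonal additivity to get $P(\sum_{j=1}^r t_jf_j)=\sum_j t_j^nP(f_j)$ for $r$ pairwise disjointly supported functions and $r$ independent scalars, and compare multinomial coefficients to kill \emph{every} mixed term $T(f_1^{[\alpha_1]},\ldots,f_r^{[\alpha_r]})$ with non-extremal $\alpha$; then run your Urysohn/regularity transfer on $r$-tuples of pairwise disjoint compacta. With that modification the argument is complete and coincides with the proof in the cited reference.
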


\section{Characterizing the dual mixed volume}
In this section we state and prove our characterizations of the dual mixed volume. Following \cite{MiSc} and \cite{DuGaPe}, our approach is to characterize the dual mixed volume based on its functional properties. 

When working with radial sets, the natural sum is the radial sum $\tilde{+}$ defined above. Therefore, whenever we say in this note that a function defined on $\mathcal S_0^n$ is additive we will mean additive with respect to the radial sum.

One of the properties we will use to characterize the dual mixed volume is its separate additivity. Let us consider a cone $V$ and an additive function $f:V\longrightarrow \mathbb R$. Then, for every $L\in V$ the function $f_L:\mathbb R^+\longrightarrow \mathbb R$ defined by $f_L(\lambda)=f(\lambda L)$ is additive and can be extended trivially to an additive function $\tilde{f}_L:\mathbb R\longrightarrow \mathbb R$ by $\tilde{f}_L(-\lambda)=-f_L(\lambda)$ 

Real additive functions are either homogeneous or quite pathological. For instance, given an additive function $f:\mathbb R\longrightarrow \mathbb R$ which is not homogeneous, its graph is dense in $\mathbb R^2$. So, an additive function $f:\mathbb R\longrightarrow \mathbb R$, or $f:\mathbb R^+\longrightarrow \mathbb R$ becomes homogeneous if it takes values in $[0,\infty)$, or it is continuous in one point, monotonic on any interval, or bounded on any interval. 

For this reason, we will require our functions to be additive and positive homogeneous, rather than just additive. We feel this improves the clarity of the reasonings.

\smallskip

Thus, the starting point for our characterization will be a  separately additive and positively homogeneous $F:\mathcal S_0^m \longrightarrow \mathbb R$. 

One such $F$ induces a separately additive and positively homogeneous application  $$T_F:C(S^{n-1})^+\times \cdots \times C(S^{n-1})^+\longrightarrow \mathbb R$$ defined by 
$$T_F(f_1, \ldots, f_m)=F(L_{f_1}, \cdots, L_{f_m}),$$
and $T_F$ can be extended to a separately linear application $T_F:C(S^{n-1})\times \cdots \times C(S^{n-1})\longrightarrow \mathbb R$ in the natural way: for $(f_1, \ldots, f_m)\in C(S^{n-1})^m$, consider the decomposition $f_i=f_i^+-f_i^-$, with $f_i^+, f_i^-\in C(S^{n-1})$ and define $$T_F(f_1,\ldots, f_m)=T_F(f_1^+-f_1^-,\ldots, f_m^+-f_m^-)=$$  $$=T_F(f_1^+,f_2^+,\ldots, f_m^+)-T_F(f_1^+, f_2^+,\ldots, f_m^-)+\cdots + (-1)^m T_F(f_1^-, f_2^-,\ldots, f_m^-).$$

In general, $T_F$ is not separately bounded (equivalently, continuous). To see an example of this, consider $m=1$ and $T_F:C(S^{n-1}) \longrightarrow \mathbb R$ any linear not continuous application.

But even if $T_F$ is separately bounded, it could well be that it can not be extended to a linear bounded application $T:C(S^{n-1}\times \cdots \times S^{n-1})\longrightarrow \mathbb R$. As mentioned in the introduction, the reason for this is that $T_F$ need not be a priori continuous for the injective topology in $C(S^{n-1})\pe \cdots \pe C(S^{n-1})$. 

Clearly, we are not interested in unbounded functions on the product of the unit spheres as candidates for representing the dual mixed volume. Therefore, we first characterize those separately linear functions which can be extended to bounded linear applications on the product of the unit spheres.  First we give a condition on $F$ that characterizes the fact that $T_F$ is separately continuous, equivalently continous as a multilinear form.

\begin{prop}\label{primera}
Let $F:\mathcal S_0^m \longrightarrow \mathbb R$ be separately additive and positively homogeneous. Let us also suppose that $F$ is separately bounded, in the following sense: 

For every choice of $(L_2, \ldots, L_m)\in K_0^{m-1}$ there exists a constant $C$ such that $|F(L, L_2, \ldots, L_m)|\leq C$ for every $L$ totally contained in the unit ball of $\mathbb R^n$, and the same condition holds when  the role of the first variable is played by any other variable. 

Then, there exists a separately regular polymeasure $\gamma:\Sigma_n\times \cdots \times \Sigma_n \longrightarrow \mathbb R$ such that, for every $(L_1,\ldots, L_m)\in \mathcal{S}_0^m$,  $$F(L_1,\ldots, L_m)=\int (\rho_{L_1}(t_1), \ldots, \rho_{L_m}(t_m))d\gamma(t_1,\ldots, t_m). $$

Conversely, given a separately regular polymeasure $\gamma:\Sigma_n\times \cdots \times \Sigma_n \longrightarrow \mathbb R$ it induces a separately additive, positively homogeneous and bounded application $F:\mathcal S_0^m \longrightarrow \mathbb R$ by  

$$F(L_1,\ldots, L_m)=\int (\rho_{L_1}(t_1), \ldots, \rho_{L_m}(t_m))d\gamma(t_1,\ldots, t_m),$$ for every $(L_1,\ldots, L_m)\in \mathcal S_0^m$,
\end{prop}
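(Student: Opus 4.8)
The plan is to route everything through the multilinear form $T_F$ constructed just before the statement, and to apply the representation theorem, Theorem \ref{representacion}. The entire content of the forward implication is to upgrade the separate boundedness hypothesis on $F$ to genuine continuity of $T_F$ as a multilinear form on $C(S^{n-1})^m$; once that is done, Theorem \ref{representacion} hands us a representing polymeasure $\gamma$, automatically separately countably additive and regular, and the integral formula for $F$ is simply $T_F$ evaluated at the radial functions $\rho_{L_i}\in C(S^{n-1})^+$, since $F(L_1,\ldots,L_m)=T_F(\rho_{L_1},\ldots,\rho_{L_m})$.

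First I would translate the hypothesis. A star body $L$ is contained in the unit ball exactly when $0\leq \rho_L\leq 1$, that is, when $\rho_L$ lies in the unit ball of the positive cone $C(S^{n-1})^+$. Thus separate boundedness of $F$ says precisely that, for each fixed tuple $(f_2,\ldots,f_m)$ of positive continuous functions, $T_F(\cdot,f_2,\ldots,f_m)$ is bounded on $\{f\in C(S^{n-1})^+:\|f\|_\infty\leq 1\}$, and similarly in the other slots. Using positive homogeneity (of degree one) this yields $|T_F(f,f_2,\ldots,f_m)|\leq C(f_2,\ldots,f_m)\|f\|_\infty$ for every $f\geq 0$; writing a general $f=f^+-f^-$ with $\|f^{\pm}\|_\infty\leq\|f\|_\infty$ and invoking linearity in the first slot extends the bound (with constant $2C$) to all $f\in C(S^{n-1})$. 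Finally, expanding the fixed arguments $f_i=f_i^+-f_i^-$ and using separate linearity removes the positivity restriction on $(f_2,\ldots,f_m)$, so that $T_F$ is bounded, hence continuous, in each variable separately.

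At this point I would invoke the standard fact that a separately continuous multilinear form on a product of Banach spaces is jointly continuous, a consequence of the uniform boundedness principle. This is the step I expect to carry the real weight: the hypothesis is assumed only for positive arguments and only one slot at a time, so the care lies in the bookkeeping that converts it into separate continuity of the extended, genuinely multilinear $T_F$ before the uniform boundedness argument applies. With $T_F$ continuous, Theorem \ref{representacion} produces a unique representing polymeasure $\gamma:\Sigma_n\times\cdots\times\Sigma_n\to\mathbb R$, separately countably additive and regular, with finite semivariation, satisfying $T_F(f_1,\ldots,f_m)=\int(f_1,\ldots,f_m)\,d\gamma$; evaluating at $f_i=\rho_{L_i}$ gives the desired formula.

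For the converse, the argument runs backwards and is essentially formal. Given a regular, separately countably additive polymeasure $\gamma$, the converse half of Theorem \ref{representacion} shows it has finite semivariation and defines a continuous multilinear form $T$ on $C(S^{n-1})^m$ with $\|T\|=\|\gamma\|$. Setting $F(L_1,\ldots,L_m)=T(\rho_{L_1},\ldots,\rho_{L_m})$, separate additivity with respect to the radial sum follows from $\rho_{L\tilde{+}L'}=\rho_L+\rho_{L'}$ together with the linearity of $T$ in each slot; positive homogeneity follows from $\rho_{\lambda L}=\lambda\rho_L$; and boundedness follows from $|F(L,L_2,\ldots,L_m)|\leq\|T\|\prod_{i\geq 2}\|\rho_{L_i}\|_\infty$, which is a finite constant once $L_2,\ldots,L_m$ are fixed and $L$ ranges over bodies inside the unit ball.
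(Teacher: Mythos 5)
Your proposal is correct and follows essentially the same route as the paper's own proof: translate separate boundedness of $F$ on bodies inside the unit ball into boundedness of $T_F(\cdot,f_2,\ldots,f_m)$ on the positive unit ball of $C(S^{n-1})$, pass to general arguments via the decomposition $f=f^+-f^-$, upgrade separate continuity to joint continuity of the multilinear form, and then invoke Theorem \ref{representacion} in both directions. The only difference is that you spell out a few steps (the role of positive homogeneity, the uniform boundedness principle, and the verification of the converse) that the paper compresses into ``standard reasonings'' and ``follows immediately.''
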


\begin{proof}
Let us consider the function $T_F$ defined above. The separate boundedness condition implies that for every choice of positive functions $( f_2, \ldots, f_m)\in C(S^{n-1})^+\times \cdots \times C(S^{n-1})^+$, there exist a constant $C$ such that the induced mapping $T_F(\cdot, f_2, \ldots, f_m):C(S^{n-1})\longrightarrow \mathbb R$ verifies  $$|T(f_1, f_2, \ldots, f_m)|\leq C$$ for every $f_1\in C(S^{n-1})^+$ with $\|f_1\|\leq 1$. 

Given $f\in C(S^{n-1})$ with $\|f\|\leq 1$, $f$ can be decomposed as $f=f^+-f^-$, where $f^+, f^-\in C(S^{n-1})^+$ and $\|f^+\|\leq 1$, $\|f^-\|\leq 1$.

Therefore, $$|T(f_1, f_2, \ldots, f_m)|\leq 2C$$ for every $f_1\in C(S^{n-1})$ with $\|f_1\|\leq 1$. 

Now, given $(f_2, \ldots, f_m)\in C(S^{n-1})^{m-1}$, we can decompose each of them as the difference of two positive functions, and standard reasonings show that $T_F(\cdot, f_2, \ldots, f_m)$ is continuous. Multilinear separately continuous functions are jointly continuous, therefore  $T_F:C(S^{n-1})\times \cdots \times C(S^{n-1})\longrightarrow \mathbb R$ is continuous. 

Now, we just have to apply Theorem \ref{representacion} to obtain $\gamma$. 

The converse statement follows immediately from  Theorem \ref{representacion}.

\end{proof}

As mentioned before, in general, $\gamma$ defined as above can not be extended to a regular measure $\mu: \Sigma_n\otimes \cdots \otimes \Sigma_n\longrightarrow \mathbb R$. Therefore, as a consequence of this and Proposition \ref{primera}, not every separable additive, homogeneous and bounded application $F:S_0^m\longrightarrow \mathbb R$ can be represented by a measure on $\Sigma_n\otimes \cdots \otimes \Sigma_n$.

In the next proposition we give a condition which characterices the extendability of $\gamma$, and, hence, the representability of $F$ by a measure.  Note that we do not need to assume a priori that $F$ is separately bounded. 

\begin{prop}\label{segunda}
Let $F:\mathcal S_0^m \longrightarrow \mathbb R$ be separately additive and positively homogeneous. Then the following are equivalent:

\begin{enumerate}
\item There exist two separately additive positively homogeneous functions $F^+:\mathcal S_0^m \longrightarrow [0,\infty)$, $F^-:\mathcal S_0^m \longrightarrow [0, \infty)$ such that $F=F^+-F^-$. 

\item There exists a radon measure on $\nu: \Sigma_{(S^{n-1})^m}\longrightarrow \mathbb R$ such that $$F(L_1,\ldots, L_m)=\int \rho_{L_1}\otimes \cdots \otimes \rho_{L_m} (t_1,\ldots, t_m)d\nu(t_1,\ldots, t_m). $$

\end{enumerate}
\end{prop}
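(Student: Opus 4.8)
The plan is to route both conditions through Theorem \ref{teo1}, matching the function-level decomposition in (1) with the polymeasure-level decomposition in condition (d), and the measure representation in (2) with condition (c). The bridge between $F$ and its representing polymeasure is supplied by Propositions \ref{primera} and Theorem \ref{representacion}.

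For the implication (1) $\Rightarrow$ (2) I would first observe that a separately additive, positively homogeneous function taking values in $[0,\infty)$ is automatically separately bounded. Indeed, if $L$ is contained in the unit ball $B$ then $\rho_L\leq\rho_B$, so $\rho_B-\rho_L$ is the radial function of a star body $L'$ with $B=L\tilde{+}L'$; positivity and additivity then give $F^+(L,L_2,\ldots,L_m)\leq F^+(B,L_2,\ldots,L_m)$, which is the desired bound, and symmetrically in every variable. Hence $F^+$, $F^-$ and $F$ are separately bounded, and Proposition \ref{primera} produces representing polymeasures $\gamma^+,\gamma^-,\gamma$, with $\gamma=\gamma^+-\gamma^-$ by the uniqueness in Theorem \ref{representacion}. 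Next I would show $\gamma^+\geq 0$ (and likewise $\gamma^-\geq 0$): by Theorem \ref{representacion} we have $\gamma^+(A_1,\ldots,A_m)=\overline{T_{F^+}}(\chi_{A_1},\ldots,\chi_{A_m})$, and by regularity of the representing (slice) measures each $\chi_{A_i}$ is a weak$^*$ limit of a net of nonnegative continuous functions bounded by $1$. Since $T_{F^+}$ evaluated at such functions equals $F^+$ of the corresponding star bodies, hence is $\geq 0$, the iterated-limit formula of Theorem \ref{representacion} forces $\gamma^+(A_1,\ldots,A_m)\geq 0$.

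Thus $\gamma$ satisfies condition (d) of Theorem \ref{teo1}, so by (d) $\Rightarrow$ (c) it extends to a regular measure $\nu$ on $\Sigma_{(S^{n-1})^m}$, which is automatically Radon because $(S^{n-1})^m$ is compact metric. To finish I would verify the integral identity: the polymeasure integral $\int(f_1,\ldots,f_m)\,d\gamma$ and the product integral $\int f_1\otimes\cdots\otimes f_m\,d\nu$ agree on characteristic functions, both equalling $\gamma(A_1,\ldots,A_m)=\nu(A_1\times\cdots\times A_m)$, and hence, since both integrals are built as uniform limits on simple functions, they agree on all bounded measurable $f_i$. Specializing to $f_i=\rho_{L_i}$ and combining with Proposition \ref{primera} yields (2). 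For the converse (2) $\Rightarrow$ (1) I would take the Jordan decomposition $\nu=\nu^+-\nu^-$ into positive Radon measures and set $F^{\pm}(L_1,\ldots,L_m)=\int\rho_{L_1}\otimes\cdots\otimes\rho_{L_m}\,d\nu^{\pm}$; since radial functions are nonnegative and $\nu^{\pm}\geq 0$ these take values in $[0,\infty)$, linearity of the integral together with $\rho_{L\tilde{+}L'}=\rho_L+\rho_{L'}$ gives separate additivity, positive homogeneity is immediate, and $F=F^+-F^-$.

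The main obstacle is the positivity transfer $F^{\pm}\geq 0\Rightarrow\gamma^{\pm}\geq 0$. The representing polymeasure is defined through the weak$^*$ continuous bidual extension evaluated on characteristic functions, which are not continuous, so one cannot directly read off positivity; the argument must use regularity of the representing measures to approximate each $\chi_{A_i}$ by nonnegative continuous functions and then carefully push the nonnegativity of $F^{\pm}$ through the \emph{iterated} weak$^*$ limits of Theorem \ref{representacion}, checking that the ordering of the limits does not spoil the sign.
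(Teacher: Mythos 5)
Your proof is correct and its skeleton is the paper's: both directions are routed through Theorem \ref{teo1}, matching the decomposition $F=F^+-F^-$ with the decomposition of the representing polymeasure into positive and negative parts (condition (d)) and the measure representation with condition (c), while the converse is in both cases the Jordan decomposition of $\nu$. The differences lie in the two technical sub-steps. For boundedness of $T_{F^{\pm}}$ you invoke monotonicity directly --- $L\subset B$ gives $B=L\tilde{+}L'$ and hence $0\le F^+(L,L_2,\ldots,L_m)\le F^+(B,L_2,\ldots,L_m)$ --- and then feed this into Proposition \ref{primera}; the paper instead argues by contradiction, taking $\varphi_j=\sup_i f_j^i$ over a putative unbounded sequence. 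Your route is cleaner, and it sidesteps the delicate point that a pointwise supremum of countably many continuous functions need not be continuous. For positivity of $\gamma^{\pm}$ you approximate each $\chi_{A_i}$ in the weak$^*$ topology by a net of continuous functions with values in $[0,1]$ and push the sign through the iterated limits of Theorem \ref{representacion}; the paper instead uses the separate regularity of the polymeasure (Lemma 2.5 of \cite{OA}) to reduce to compact and open sets and then applies Urysohn's lemma. Both work, but your version needs the standard remark that a single net can be chosen to work against all regular measures simultaneously (index the net by pairs consisting of a finite family of measures and an $\varepsilon>0$, and apply regularity to the sum of their variations); you gesture at this but should spell it out, since weak$^*$ convergence is convergence against every measure at once, not just against the finitely many that regularity hands you at a time. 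Your explicit verification that the polymeasure integral against $\gamma$ and the Lebesgue integral of $\rho_{L_1}\otimes\cdots\otimes\rho_{L_m}$ against $\nu$ coincide is a detail the paper leaves implicit, and it is worth keeping.
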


\begin{proof}
Let us suppose (1). We use $T_{F^*}$ for any of $T_{F^+}$ or $T_{F^-}$. 

Let us first see that $T_{F^*}$ is separately monotone in the following sense: If $f\geq g\geq 0$, then for every $f_2, \ldots, f_m\in C(S^{n-1})^+$ $$T_{F^*}(f, f_2, \ldots, f_m)\geq T_{F^*}(g, f_2, \ldots, f_m)$$
This follows from the separate additivity of $F^*$ by $$T_{F^*}(f, f_2, \ldots, f_m)= T_{F^*}(g, f_2, \ldots, f_m)+ T_{F^*}(f-g, f_2, \ldots, f_m)\geq T_{F^*}(g, f_2, \ldots, f_m).$$

By standard reasonings, it is easy to prove now that $T_{F^*}$ is monotone in the following sense: If $f_i\geq g_i\geq 0$ ($1\leq i \leq m$), then  $$T_{F^*}(f_1,  \ldots, f_m)\geq T_{F^*}(g_1,  \ldots, g_m).$$

To see that $T_{F^*}$ is continuous, we just need to prove that it is bounded on norm one functions. Decomposing functions into positive and negative part, if suffices to prove that it is bounded for positive norm one functions. If this was not the case, there would exist a sequence $(f_1^i, \ldots, f_m^i)_{i\in \mathbb N}\subset C(S^{n-1})^+\times \cdots \times C(S^{n-1})^+$, with $\|f_j^i\|\leq 1 $ for every $i,j$ such that, for every $i\in \mathbb N$,  
$$T_{F^*}(f_1^i, \ldots, f_m^i)\geq i.$$

For $1\leq j \leq m$  we define $\varphi_j=\sup_i f_j^i$. Then $\varphi_j\in C(S^{n-1})^+$, $\|\varphi_j\|\leq 1$, and we have 
$$F^*(L_{\varphi_1},  \ldots, L_{\varphi_m})\geq i$$ for every $i\in \mathbb N$, a contradiction with the fact that $F^*$ is bounded. 

Therefore, according to Theorem \ref{representacion}, there exists a separately regular polymeasure $\gamma_{F^*}:\Sigma(S^{n-1})\times \cdots \times \Sigma(S^{n-1})\longrightarrow \mathbb R$ such that 

$$T_{F^*}(f_1, \ldots, f_k)=\int (f_1(t_1), \ldots, f_m(t_m) )d\gamma_{F^*}(t_1, \ldots, t_m).$$

Let us see that $\gamma_{F^*}$ is positive: Pick $(A_1,\ldots, A_m)\in  \Sigma_n\times \cdots \times \Sigma_n$ and  $\epsilon>0$. Applying \cite[Lemma 2.5]{OA} we know of the existence of compact sets $K_i\subset A_i$ ($1\leq i \leq m$) such that $$|\gamma_{F^*}(A_1,\ldots, A_m)-\gamma_{F^*}(K_1, \ldots, K_m)|<\epsilon.$$ 

Applying again the same lemma, we obtain the existence of open sets $G_i\supset K_i$  ($1\leq i \leq m$) such that

$$|\gamma_{F^*}(G_1,\ldots, G_m)-\gamma_{F^*}(K_1, \ldots, K_m)|<\epsilon.$$

We can now use Urysohn's Lemma to find, for every $1\leq i \leq m$, a function $f_i \in C(S^{n-1})^+$ with $supp f_i\subset G_i$ and $f_i(t)=1$ for every $t\in K_i$. 

We now have $$\gamma_{F^*}(A_1,\ldots, A_m)>\gamma_{F^*}(K_1, \ldots, K_m)-\epsilon >$$ $$>\gamma_{F^*}(G_1,\ldots, G_m)-2\epsilon\geq \int (f_1(t_1), \ldots, f_m(t_m) d\gamma_{F^*}(t_1, \ldots, t_m) -2\epsilon \geq -2\epsilon.$$

This proves that $\gamma_{F^*}$ is positive. Therefore, $\gamma=\gamma_{F^+}-\gamma_{F^-}$ is the sum of a positive and a negative polymeasure.  Now, Theorem \ref{teo1} proves that $T_F$ can be represented by a regular measure $\nu$ defined on $\Sigma_{(S^{n-1})^m}$, and  (2) follows. 

\smallskip

Conversely, suppose that $F$ can be represented by a measure $\nu$ as in (2). It is very easy to check that in that case $\nu$ also represents $T_F$. The decomposition $\nu=\nu^+-\nu^-$  (see \cite{Co}) induces now the decomposition $F=F^+-F^-$ with the required properties. 
\end{proof}

Before we can state our main theorem, we isolate a technicality of the proof for clarity in the presentation. It is a direct translation of \cite[Lemma 2.6]{OA}.

\begin{lema}\label{continuas}
Let  $T_F:C(S^{n-1})\times \cdots\times C(S^{n-1})\longrightarrow \mathbb R$ be a continuous multilinear form and let $\gamma_F:\Sigma^n\times \cdots \times \Sigma^n\longrightarrow \mathbb R$
be its representing polymeasure. Suppose that $T_F(f_1, \ldots, f_m)=0$ whenever there exist $1\leq i, j\leq m$ such that $f_i$ and $f_j$ have disjoint support. Then, if we choose open sets $(G_1,\ldots, G_m)\in \Sigma^n\times \cdots \Sigma^n$, such that there exist $1\leq i, j\leq m$ with $G_i\cap G_j=\emptyset$ we have 
    $$\gamma(G_1,\ldots, G_m)
    =0.$$
\end{lema}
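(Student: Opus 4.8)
The plan is to exploit the regularity of the representing polymeasure $\gamma_F$ together with the integral representation of $T_F$, approximating the characteristic functions $\chi_{G_k}$ of the open sets by continuous functions that respect the disjointness hypothesis. Suppose, after relabeling, that $G_1\cap G_2=\emptyset$. First I would use the inner regularity of $\gamma_F$ (guaranteed by Theorem \ref{representacion}, and available in the form of \cite[Lemma 2.5]{OA}, as was done in the proof of Proposition \ref{segunda}) to replace each open set $G_k$ by a compact set $K_k\subset G_k$, at the cost of an arbitrarily small error $\epsilon$ in the value of $\gamma_F$. Because $G_1$ and $G_2$ are disjoint, the compacta $K_1\subset G_1$ and $K_2\subset G_2$ are disjoint as well, so in particular $K_1$ and $K_2$ are disjoint compact subsets of the metric space $S^{n-1}$ and hence are at positive distance.

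The next step is to manufacture continuous functions to feed into $T_F$. Using Urysohn's Lemma, for each $1\leq k\leq m$ I would choose $f_k\in C(S^{n-1})^+$ with $\operatorname{supp} f_k\subset G_k$, $f_k(t)=1$ for $t\in K_k$, and $0\leq f_k\leq 1$. The crucial point is that, since $G_1\cap G_2=\emptyset$, we have $\operatorname{supp} f_1\subset G_1$ and $\operatorname{supp} f_2\subset G_2$ disjoint, so $f_1$ and $f_2$ have disjoint support. The hypothesis on $T_F$ then forces
$$T_F(f_1,\ldots,f_m)=\int (f_1(t_1),\ldots,f_m(t_m))\,d\gamma_F(t_1,\ldots,t_m)=0.$$

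Finally I would run the same two-sided estimate used at the end of the proof of Proposition \ref{segunda}: since $\chi_{K_k}\leq f_k\leq \chi_{G_k}$, the semivariation estimates sandwich the integral $\int (f_1,\ldots,f_m)\,d\gamma_F$ between values that differ from $\gamma_F(K_1,\ldots,K_m)$, and hence from $\gamma_F(G_1,\ldots,G_m)$, by $O(\epsilon)$. Combining this with the vanishing of the integral yields $|\gamma_F(G_1,\ldots,G_m)|\leq C\epsilon$ for a constant independent of $\epsilon$, and letting $\epsilon\to 0$ gives $\gamma_F(G_1,\ldots,G_m)=0$.

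The step I expect to require the most care is the sandwich estimate controlling $|\gamma_F(G_1,\ldots,G_m)-\int(f_1,\ldots,f_m)\,d\gamma_F|$. Unlike the scalar measure case, one cannot simply integrate a pointwise inequality, because $\gamma_F$ is only a polymeasure and need not have finite variation; the error must be controlled through the semivariation $\|\gamma_F\|$ and the regularity statements, handling each of the $m$ slots separately and telescoping the differences $\gamma_F(G_1,\ldots)-\gamma_F(K_1,\ldots)$ one coordinate at a time. Since this is exactly the mechanism already deployed in Proposition \ref{segunda}, and the statement is explicitly a translation of \cite[Lemma 2.6]{OA}, I would lean on those estimates rather than reproving them from scratch.
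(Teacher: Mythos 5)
Your skeleton is the right one --- Urysohn functions squeezed between compact sets and the disjoint open sets, the disjoint-support hypothesis killing $T_F(f_1,\ldots,f_m)$, then a passage to the limit --- but the final limiting step, which you yourself flag as the delicate one, has a genuine gap. The ``two-sided sandwich'' you want to import from Proposition \ref{segunda} is not available here: in that proposition the chain $\gamma_{F^*}(G_1,\ldots, G_m)\geq \int (f_1,\ldots,f_m)\,d\gamma_{F^*}\geq 0$ rests entirely on the monotonicity of $T_{F^*}$, which comes from the positivity of $F^{\pm}$; in Lemma \ref{continuas} the form $T_F$ is an arbitrary continuous multilinear form and no such monotonicity exists. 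Your fallback --- telescoping one slot at a time and bounding each difference $\overline{T}(\ldots,\chi_{G_k}-f_k,\ldots)$ through the semivariation of $\gamma_F$ on the collar $G_k\setminus K_k$ --- requires choosing the compacts $K_k$ so that $\|\gamma_F\|(S^{n-1},\ldots,G_k\setminus K_k,\ldots,S^{n-1})<\epsilon$. That is a \emph{uniform} inner-regularity statement for the semivariation, strictly stronger than what \cite[Lemma 2.5]{OA} (as it is used throughout this paper) provides: that lemma only controls the \emph{values} $\gamma(B_1,\ldots,B_m)$ on rectangles sandwiched between compacts and opens. Semivariations of representing (poly)measures are in general not inner regular --- already for linear operators on $C(S)$ the semivariation of the representing measure need not be small on small sets unless the operator is weakly compact --- so this step cannot be waved through by citing the regularity results at hand.

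The paper's proof sidesteps the problem entirely: for each $l$ it takes the whole net $(f_{C_l})$ of Urysohn functions indexed by the compact subsets $C_l\subset G_l$, observes that $f_{C_l}\to\chi_{G_l}$ weak$^*$, and invokes the separately weak$^*$-continuous extension $T^{**}$ of Theorem \ref{representacion} to write $\gamma_F(G_1,\ldots,G_m)=\lim_{C_1}\cdots\lim_{C_m}T_F(f_{C_1},\ldots,f_{C_m})$, where every term of the iterated net is $0$ by hypothesis. No quantitative estimate, no $\epsilon$, and no control of the semivariation is needed. You should either switch to that net argument or supply a proof of the uniform semivariation regularity your telescoping implicitly assumes.
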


\begin{proof}
Given an open set $G_l\in \Sigma_n$, we can consider the directed set of the Borel
compact sets $C_l \subset G_l$ with the order given by the inclusion.
Applying Urysohn's lemma, for every such $C_l$ we can choose $f_{C_l}\in
C(S^{n-1})$, with $\chi_{C_l}\leq f_{C_l}\leq \chi_{G_l}$. It follows from the
regularity of the measures representing $C(S^{n-1})^*$ that the net
$f_{C_l}$ converges weak$^*$ to $\chi_{G_l}$. Hence, as explained in Theorem \ref{representacion}, 
    $$\gamma(G_1,\ldots, G_m)=\lim_{C_1}\cdots \lim_{C_m} T_F(f_{C_1}, \dots, f_{C_m})=0.$$
\end{proof}

Our purpose is to characterize the dual mixed volume by functional properties of $F$. The conditions above clearly do not suffice for this. In Theorem \ref{main} below we give conditions which do suffice. Condition (1) below appeared already in \cite{DuGaPe}. Condition (2) is new. This condition is based on the behaviour of the associated polynomial, which in this case turns out to be the $n$-dimensional volume, rather than the multilinear function. Let us note that this new condition is satisfied in case the polynomial is a valuation on the star bodies. This could our result more useful for certain applications.

We state and prove the result for star bodies. It remains true for star sets, replacing {\em regular measure} by {\em bounded additive measure} in condition (3) below. The proof for the case of star sets are slightly simple and we omit it.

Our main result is the following: 

\begin{teo}\label{main}
Let $F:\mathcal S_0^m \longrightarrow \mathbb R$ be separately additive and positively homogeneous. Let us also assume that $F$ verifies one of the following conditions: 

\begin{itemize}

\item[(A)] $F=F^+-F^-$, where $F^+$ and $F^-$ are also separately additive and positively homogeneous. 

\item[(B)] $F$ is separately bounded, in the sense of Proposition \ref{primera}. 
\end{itemize}

Then, the following are equivalent:  

\begin{enumerate}
\item $F(L_1,\ldots, L_m)=0$ whenever there exist $1\leq i_1, i_2\leq m$ such that $L_{i_1}\cap L_{i_2}=\{o\}$. 

\item $F$ is symmetric and the associated polynomial $P_F$ verifies $P_F(L\tilde{+} M)=P_F(L) + P_F(M)$ whenever $L\cap M=\{o\}$

\item There exists a regular  measure $\mu: \Sigma_n\longrightarrow \mathbb R$ such that $$F(L_1,\ldots, L_m)=\int \rho_{L_1}(t)\cdots \rho_{L_m} (t) d\mu(t). $$

\end{enumerate}

Moreover, if $F$ is rotationally invariant, then there exists  a constant $c\in \mathbb R$ such that $F(L_1, \ldots, L_n)=c\tilde{V}(L_1, \ldots, L_n)$
\end{teo}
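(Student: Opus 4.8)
The plan is to prove the chain of equivalences $(1)\Leftrightarrow(2)\Leftrightarrow(3)$ and then treat the rotational invariance as a separate concluding step. I would organize the argument around reducing everything to the polymeasure representation already established in Propositions~\ref{primera} and~\ref{segunda}, so that the content of the theorem becomes a statement about when the representing polymeasure $\gamma$ (or its product-$\sigma$-algebra extension $\nu$) is concentrated on the diagonal.

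\medskip

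I would begin with $(3)\Rightarrow(1)$, which is essentially immediate: if $F(L_1,\ldots,L_m)=\int \rho_{L_1}(t)\cdots\rho_{L_m}(t)\,d\mu(t)$ and some $L_{i_1}\cap L_{i_2}=\{o\}$, then $\rho_{L_{i_1}}$ and $\rho_{L_{i_2}}$ have disjoint supports as functions on $S^{n-1}$, so the integrand vanishes identically. Symmetry of the integral representation, together with the factored form $\rho_{L_1}\cdots\rho_{L_m}$, also gives the additivity property of $P_F$ in $(2)$, since $P_F(L)=F(L,\ldots,L)=\int \rho_L^m\,d\mu$ and $\rho_{L\tilde+M}^m=(\rho_L+\rho_M)^m=\rho_L^m+\rho_M^m$ whenever $\rho_L,\rho_M$ have disjoint support. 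Thus $(3)$ implies both $(1)$ and $(2)$ with little work.

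\medskip

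The substantive direction is $(1)\Rightarrow(3)$ (and likewise $(2)\Rightarrow(3)$), and here I expect the main obstacle to lie. Under either hypothesis (A) or (B), Proposition~\ref{segunda} or~\ref{primera} gives a representing polymeasure $\gamma$; under (A) it extends to a regular measure $\nu$ on $\Sigma_{(S^{n-1})^m}$. The key step is to show that the disjointness vanishing condition $(1)$ forces $\nu$ to be supported on the diagonal $\Delta=\{(t,\ldots,t):t\in S^{n-1}\}$, so that integration against $\nu$ collapses to integration of the product $\rho_{L_1}\cdots\rho_{L_m}$ against a single measure $\mu$ on $S^{n-1}$. To see the concentration on the diagonal, I would translate condition $(1)$ into the statement that $T_F(f_1,\ldots,f_m)=0$ whenever two of the $f_i$ have disjoint support (this is the orthogonality hypothesis of Lemma~\ref{continuas}), and then invoke Lemma~\ref{continuas} to conclude $\gamma(G_1,\ldots,G_m)=0$ for open rectangles with two disjoint factors. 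A covering/regularity argument then shows $\nu$ annihilates every Borel set disjoint from $\Delta$, which is exactly orthogonal additivity of the induced polynomial; at this point Theorem~\ref{TeoOA} applies to the orthogonally additive homogeneous polynomial $P_F(f)=\int f^m\,d\nu$ and produces the single measure $\mu$ with $P_F(f)=\int f^m\,d\mu$. Polarizing, and using symmetry (which in the route from $(1)$ must be recovered, whereas from $(2)$ it is assumed), recovers the full multilinear representation $(3)$. The delicate point will be handling the case $m>2$ and making the diagonal-concentration argument rigorous via the regularity and extension machinery of Theorem~\ref{teo1} rather than by a naive measure-theoretic decomposition.

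\medskip

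For the final assertion, once representation $(3)$ is in hand I would use rotational invariance of $F$ to deduce rotational invariance of $\mu$: for any rotation $g\in SO(n)$, invariance of $F$ forces $\int \rho_{gL_1}\cdots\rho_{gL_m}\,d\mu=\int \rho_{L_1}\cdots\rho_{L_m}\,d\mu$, and since $\rho_{gL}(t)=\rho_L(g^{-1}t)$ this says $\mu$ and its pushforward under $g$ induce the same functional on products of radial functions; as radial functions of star bodies are exactly the positive continuous functions, which separate points densely enough, this yields $g_*\mu=\mu$. The only rotation-invariant regular (finite) measure on $S^{n-1}$ is, up to a multiplicative constant, the Lebesgue surface measure $du$, so $\mu=c'\,du$ for some constant, giving $F(L_1,\ldots,L_m)=c'\int\rho_{L_1}\cdots\rho_{L_m}\,du=c\,\tilde V(L_1,\ldots,L_n)$ with $c=nc'$, by the very definition of $\tilde V$. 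This completes the identification with a scalar multiple of the dual mixed volume.
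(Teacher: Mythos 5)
Your outline follows the same route as the paper: Propositions \ref{primera}/\ref{segunda} to get the representing polymeasure, Lemma \ref{continuas} plus inner/outer regularity to kill the off-diagonal rectangles, orthogonal additivity of $P_F$ feeding into Theorem \ref{TeoOA}, and uniqueness of the rotation-invariant measure on $S^{n-1}$ at the end. The only organizational difference is that you prove $(1)\Rightarrow(3)$ directly where the paper factors through $(1)\Rightarrow(2)\Rightarrow(3)$; your pushforward argument for the rotational-invariance step is if anything cleaner than the paper's supremum-over-continuous-functions computation. Two points, however, need repair.

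First, you phrase the key step as showing that the product-space measure $\nu$ on $\Sigma_{(S^{n-1})^m}$ is concentrated on the diagonal. Under hypothesis (B) alone no such $\nu$ need exist: extendability of $\gamma$ to a measure on the product $\sigma$-algebra is precisely what Proposition \ref{segunda} characterizes via the decomposition $F=F^+-F^-$, i.e.\ via hypothesis (A). So as literally stated your diagonal-concentration argument covers only case (A). The fix, which is what the paper does, is to work entirely at the level of the polymeasure $\gamma$ on rectangles: regularity (\cite[Lemma 2.5]{OA}) plus Lemma \ref{continuas} gives $\gamma(A_1,\ldots,A_m)=0$ whenever two of the $A_i$ are disjoint, and orthogonal additivity of $P_F$ then follows directly from the vanishing of the mixed terms $T_F(h_1,\ldots,h_m)$ in the expansion of $P_F(f+g)$ --- no product measure is ever needed. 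Second, you correctly note that symmetry of $T_F$ must be ``recovered'' before polarization identifies $T_F$ with the symmetric form $\int f_1\cdots f_m\,d\mu$ produced by Theorem \ref{TeoOA}, but you do not say how. This is not automatic; the paper derives it by evaluating the extension $\overline{T}_F$ on simple functions subordinate to a common partition, where the off-diagonal vanishing of $\gamma$ collapses the sum to the manifestly symmetric expression $\sum_l a_l^1\cdots a_l^m\,\gamma(A_l,\ldots,A_l)$. Without that (or an equivalent) argument the passage from the representation of the polynomial $P_F$ back to the representation of the multilinear $F$ in $(3)$ is incomplete.
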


\begin{proof}
Clearly (3) implies (1) and (2). Let us see that (1) implies (2): 

In the presence of (A) or (B), Propositions \ref{primera} and \ref{segunda} imply that $$T_F:C(S^{n-1})\times \cdots \times C(S^{n-1})\longrightarrow \mathbb R$$ is continuous and can be represented by a separately regular polymeasure 
$$\gamma_F:\Sigma_n\times \cdots \times \Sigma_n \longrightarrow \mathbb R$$

Let us see that $\gamma_F$ verifies that, for every $(A_1,\ldots, A_m)\in  \Sigma_n\times \cdots \times \Sigma_n$, if there exist $1\leq j, l\leq m$ such that $A_j\cap A_l=\emptyset$, then $\gamma_F(A_1,\ldots, A_m)=0$. 

Let us suppose without loss of generality that $A_1\cap A_2=\emptyset$, and let us choose  $\epsilon>0$. We apply  \cite[Lemma 2.5]{OA} and obtain compact sets  $C_i\subset A_i$ ($1\leq i \leq m$)  such that $$|\gamma_F(A_1,\ldots, A_m)-\gamma_F(C_1, \ldots, C_m)|<\epsilon.$$ We apply again \cite[Lemma 2.5]{OA} and the normality of $S^{n-1}$ to obtain open sets  $G_i\supset C_i$  ($1\leq i \leq m $), with $G_1\cap G_2=\emptyset$ such that

$$|\gamma_F(G_1,\ldots, G_m)-\gamma_F(C_1, \ldots, C_m)|<\epsilon.$$

Now, applying Lemma \ref{continuas} we get 

$$|\gamma_F(A_1,\ldots, A_m)|<|\gamma_F(C_1, \ldots, C_m)|+\epsilon <2\epsilon. $$ 

Since this happens for arbitrary $\epsilon>0$,  we get that $\gamma_F(A_1,\ldots, A_m)=0$. 

To see that $T_F$ is symmetric, it suffices to check that $\overline{T}_F:B(\Sigma_n)\times \cdots \times B(\Sigma_n)\longrightarrow 0$ is symmetric. By density, it is enough to check symmetry for simple functions. To see this, pick simple functions $g_1, \ldots, g_m$. There exist a finite collection of disjoint sets $(A_l)_{l=1}^s$ such that, for every $1\leq i \leq m$, $g_i=\sum_{l=1}^s a_l^i \chi_{A_l}$. Therefore $$\overline{T_F}(g_1, \ldots, g_m)=\overline{T_F}(\sum_{l_1=1}^s a_{l_1}^1 \chi_{A_{l_1}},\ldots, \sum_{l_m=1}^s a_{l_m}^m \chi_{A_{l_m}})=$$ $$=\sum_{l_1=1}^s \cdots \sum_{l_m=1}^s a_{l_1}^1\cdots a_{l_m}^m \gamma_F(\chi_{A_{l_1}}, \ldots, \chi_{A_{l_m}}).$$

We can use the previous reasonings to cancel all the terms where the $A_{l_i}$ do not all coincide, and we get 
$$\overline{T_F}(g_1, \ldots, g_m)=\sum_{l=1}^s a_{l}^1\cdots a_{l}^m \gamma_F(\chi_{A_{l}}, \ldots, \chi_{A_{l}})$$
and this expression is clearly symmetric. 

Finally, let $L, M$ be radial bodies verifying $L\cap M=\{o\}$. Then 
$$P_F(M\tilde{+}L)=T_F(L,L,\cdots, L)+ T_F(L,\ldots, L, M) + \cdots + T_F(M,M,\ldots, M)= $$ $$=T_F(L,L,\cdots, L)+T_F(M,M,\ldots, M)=P_F(L)+P_F(M),$$
and (2) follows. 

\smallskip

To see that  (2) implies (3), note first that, reasoning as before, we can assure the existence of   $T_F$ and $\gamma_F$. Condition (2) now implies that the polynomial associated to $T_F$ is orthogonally additive. Now, Theorem \ref{teoOA} suffices to finish.  

\smallskip

Finally, if $F$ is rotationally invariant we proceed as follows. Let $A\subset S^{n-1}$ be a Borel set, let $\phi$ be a rotation on $S^{n-1}$ and let  $S=\{f\in C(S^{n-1})\ : \ f\leq \chi_A \}$. The regularity of  $\mu $ implies that  $\mu(A)=\sup_{f\in S}\int fd\mu$. Let us note that $\phi S=\{\phi f\ :\ f\in S\}=\{g\in C(S^{n-1})\ : \ g\leq \chi_{\phi A})\}$. Thus, using the rotational invariance of $F$, we get

$$\mu(A)=\sup_{f\in S}\int f d\mu=\sup_{f\in S}\int \phi f d\mu=\sup_{\phi f\in \phi S}\int \phi f d\mu=\sup_{g\in \phi S}\int g d\mu\leq \mu(\phi(A)),$$
where $g=\phi(f)$. 
%
%
%
%, by the regularity of $\mu$ we have that there exist a compact set $K$ and an open $G$ such that $K\subset A$ and $K\subset G$ satisfying $|\mu(A)-\mu(K)|<\epsilon$ and $|\mu(G)-\mu(K)|<\epsilon$. Again, applying Urysohn's lemma we can choose a function $f\in C(S^{n-1})$ with $\chi_{K}\leq f\leq \chi_{G}$. Thus,
%
%\begin{eqnarray*}
%\mu(A)&\geq &\mu(G)-2\epsilon= \int \chi_{G}(t) d\mu(t)-2\epsilon=\int \chi_{G}(t)^n d\mu(t)-2\epsilon\geq\int f(t)\cdots f(t) d\mu(t)-2\epsilon\\
%&=&F(L_f,\cdots,L_f)-2\epsilon=F(\phi L_f,\cdots, \phi L_f)-2\epsilon=F(L_{\phi f},\cdots, L_{\phi f})-2\epsilon\\
%&=&	\int \phi f(t)\cdots \phi f(t) d\mu(t)-2\epsilon\geq \mu(\phi K)-2\epsilon\geq \mu(\phi A)-\epsilon   
%\end{eqnarray*} 
%
%And since $\epsilon>0$ was arbitrary we get that $\mu(A)\geq \mu(\phi A)$. 
We can reason similarly to obtain $\mu(\phi A)\leq \mu(A)$. This, together with the uniqueness (up to constant multiplication) of the Lebesgue measure among rotation invariant measures on $S^{n-1}$, concludes the proof.
\end{proof}

Let us note that we need some condition guaranteeing the continuity of $T_F$. Consider a sequence $(x_i)_{i\in \mathbb N} \subset S^{n-1}$, with $x_i\not = x_j$ for every $i\not = j$. Consider now a sequence of disjoint open sets $G_i\supset x_i$ and positive functions $f_i\in C(S^{n-1})^+$ verifying $\|f_i\|=1$, $f_i(x_i)=1$ and $supp f_i\subset G_i$.  Clearly the set $\{f_i; i\in \mathbb N\}\subset C(S^{n-1})$ is linearly independent. Therefore, it can be completed to a Hamel basis $B$ of $C(S^{n-1})$. Consider now the bilinear form $$T:C(S^{n-1})\times C(S^{n-1})\longrightarrow \mathbb R$$ defined on the basis $B$ by $$T(f_i, f_j)=\delta_{ij}, \, 
T(b,b')=0 \mbox{ for any other choice of }b, b'\in B,$$ and $T$ defined by linearity on the rest of $C(S^{n-1})\times C(S^{n-1})$. 

Consider now the function $F:K_0^2\longrightarrow \mathbb R$ defined by $F(L,M)=T(\rho_L, \rho_M)$. Then $F$ is linear, but it can not be represented by a polymeasure $\gamma$.

\smallskip

The result above includes and extends the main results in \cite{DuGaPe}. To see this, note that, as the authors mention in that paper, additive functions taking values in $[0,\infty)$ are always positively homogenenous.

\smallskip

In that same paper, the authors study the following question 

\begin{question}
Let $F:\mathcal S_0^n \longrightarrow \mathbb R$ be separately additive, rotation invariant, and vanishes when the intersection of two of the arguments is $\{o\}$. Does there exists a constant $c\in \mathbb R$ such that $F(L_1, \ldots, L_n)=c\tilde{V}(L_1, \ldots, L_n)$ ?

\end{question}

They show that, thus formulated, in the presence of the Axiom of Choice the question is false. The counterexample follows from the existence of additive non linear functions. Next, they ask if a positive answer to the question is compatible with ZF. A path to answer this could be to study if the reasonings in \cite{OA} apply in the Solovay model, since in that case additive functions are automatically linear and continuous.

\end{document}